\providecommand{\U}[1]{\protect\rule{.1in}{.1in}}
\def\theenumi{\arabic{enumi}}
\def\theenumii{\alph{enumii}}
\def\p@enumii{\theenumi.}
\def\theenumiii{\arabic{enumiii}}
\def\p@enumiii{(\theenumi)(\theenumii)}
\def\p@enumiv{\p@enumiii.\theenumiii}
\DeclareMathOperator{\F}{Frob}
\DeclareMathOperator{\Aut}{Aut}
\DeclareMathOperator{\Sp}{Spec}
	\newtheorem{theorem}{Theorem} 
	\newtheorem*{theorem*}{Theorem} 
	\newtheorem{lema}[theorem]{Lemma}
	\newtheorem*{lema*}{Lemma}
	\newtheorem*{corollary*}{Corollary} 
	\theoremstyle{definition}				
	\newtheorem*{conjecture*}{Conjecture} 
	\newtheorem*{proposition*}{Proposition}
	\newtheorem*{problem*}{Problem}
	\newtheorem*{example*}{Example}
	\newtheorem{definition}[theorem]{Definition} 
	\newtheorem*{definition*}{Definition} 
	\newtheorem*{notation*}{Notation} 
	\newtheorem*{remark*}{Remark} 
	\newtheorem*{exercise*}{Exercise}
\begin{document}
	
	\title{ Average size of the automorphism group of smooth projective hypersurfaces over finite fields}

	\date{}
\author{
Vlad Matei
}
\address[Vlad Matei]{
\begin{itemize}
\item[-]
Department of Mathematics University of California Irvine, 340 Rowland Hall, Irvine, CA, 92697
\end{itemize}
}
 \email[Vlad Matei]{vmatei@math.uci.edu}

	\maketitle
		
	\begin{abstract}
	
	In this paper we show that the average size of the automorphism group over $\mathbb{F}_q$ of a smooth degree $d$ hypersurface in $\mathbb{P}^{n}_{\mathbb{F}_q}$ is equal to $1$ as $d\rightarrow \infty$. We also discuss some consequence of this result for the moduli space of smooth degree $d$ hypersurfaces in $\mathbb{P}^n$.
	
	\end{abstract}
	
	\section{ Introduction }

\begin{definition*} Let $\mathcal{S}_{n,d}$ denote the set of smooth degree $d$ hypersurfaces in $\mathbb{P}^{n}_{\mathbb{F}_q}$.

\end{definition*}

The automorphism group of a projective smooth degree $d$ hypersurface $X$, $\Aut(X)$,  in $\mathbb{P}^{n}_{\mathbb{F}_q}$  over the algebraic closure $\overline{\mathbb{F}_q}$   has been an object of intense  study over the past decades. It is known that for most $(n,d)$, i.e $(n,d)\neq (2,3),(3,4)$ all of the automorphisms of the  hypersurface $X$ are induced by by an automorphism $\mathbb{P}^{n}_{\overline{\mathbb{F}_q}}$. This has been proven  by Matsumura and Monsky in \cite{mm} for $n\geq 2$ and Chang in \cite{ch} for $n=1$.

Another classical fact is that $\Aut(X)$ is finite for $n\geq 2, d\geq 3$. This was shown for $n\geq 3$ by Matsumura and Monsky in \cite{mm}. For $n=2$ and $d\geq 4$, the genus of a smooth degree $d$ curve is $g=(d-1)(d-2)/2$ is a least $2$ so we can use an old result of Schmid \cite{sc}. Finally, the argument for $d=3$ can be found in \cite{po3}.

We also know that for $(n,d)\neq (2,3)$ there is a open subset $U_{n,d}$ of  $\mathcal{S}_{n,d}$ such that $\Aut(X)=\{1\}$; here we include $(n,d)=(3,4)$ with the caveat that we are only considering linear automorphisms, i.e., induced by automorphisms of $\mathbb{P}^n$. This result can be found in work of Katz and Sarnak \cite[Lemma 11.8.5]{ka}. The fact that $U_{n,d}$ is nonempty follows from work of \cite{mm} for $n\geq 3,d\geq 3$, and this can be adapted to work for the cases $n\geq 2, d\geq 4$. An alternative proof of the case $n=2$ and $d\geq 4$ can be found in \cite[Lemma 10.6.18]{ka}. 

For our purposes we are only going restrict to the set of automorphisms defined over the base field of the hypersurface, $\mathbb{F}_q$, and prove a quantitative version of the above.

\begin{theorem}

We have  that 

$$\lim_{d\rightarrow \infty} \frac{\displaystyle\sum_{X\in \mathcal{S}_{n,d}} |\Aut_{\mathbb{F}_q}(X)|}{|\mathcal{S}_{n,d}|}=1.$$

.

\end{theorem}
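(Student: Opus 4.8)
The plan is to turn the sum over hypersurfaces into a sum over the (finite!) linear automorphism group. Since $d\to\infty$ we may assume $(n,d)\neq(2,3),(3,4)$, so by Matsumura--Monsky every automorphism of $X$ is induced by an element of $\operatorname{PGL}_{n+1}$; taking $\FF_q$-points, $\Aut_{\FF_q}(X)=\{g\in \operatorname{PGL}_{n+1}(\FF_q): gX=X\}$. Writing $|\Aut_{\FF_q}(X)|=\sum_{g}\mathbf 1[gX=X]$ and exchanging the order of summation,
\[
\sum_{X\in\mathcal S_{n,d}}|\Aut_{\FF_q}(X)|=\sum_{g\in \operatorname{PGL}_{n+1}(\FF_q)}\#\{X\in\mathcal S_{n,d}:gX=X\}=|\mathcal S_{n,d}|+\sum_{g\neq 1}\#\operatorname{Fix}(g),
\]
the identity contributing $|\mathcal S_{n,d}|$. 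Thus the statement is equivalent to $\big(\sum_{g\neq1}\#\operatorname{Fix}(g)\big)/|\mathcal S_{n,d}|\to 0$. The key leverage is that $\operatorname{PGL}_{n+1}(\FF_q)$ is a finite group whose order does not depend on $d$, so it suffices to show that for each fixed $g\neq1$ the fixed locus $\operatorname{Fix}(g)$ is a vanishingly small fraction of $\mathcal S_{n,d}$, and then sum over the $O(1)$ non-identity elements.

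Two quantitative inputs are needed. For the denominator I would invoke Poonen's Bertini theorem over finite fields: the density of smooth degree-$d$ hypersurfaces tends to $\prod_{j=1}^{n+1}(1-q^{-j})>0$, so $|\mathcal S_{n,d}|\gg q^{M}$ with $M=\binom{n+d}{n}=\dim_{\FF_q}\Sym^d$ (I suppress the harmless factor $q-1$ from projectivization). For the numerator, note that a hypersurface $[f]$ is fixed by $g$ exactly when $f$ is an eigenvector of a lift $\hat g\in\GL_{n+1}(\FF_q)$ acting on $\Sym^d$; the fixed locus is the union over eigenvalues $\mu$ of the projectivized eigenspaces $\PP(W_\mu)$, and since distinct eigenspaces meet only in $0$,
\[
\#\operatorname{Fix}(g)\le q^{\,\max_\mu\dim W_\mu}.
\]
Everything then reduces to the estimate $\max_\mu\dim W_\mu\le M-\delta(d)$ with $\delta(d)\to\infty$; feeding this in gives $\#\operatorname{Fix}(g)\ll q^{M-\delta(d)}$, and dividing by $|\mathcal S_{n,d}|\gg q^M$ yields a bound $\ll q^{-\delta(d)}\to0$ for each term.

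It remains to bound the largest eigenspace, and I would split on the Jordan decomposition $\hat g=\hat g_s\hat g_u$. If the semisimple part $\hat g_s$ is non-scalar, then since $\ker(\hat g-\mu)\subseteq\ker(\hat g_s-\mu)$ it suffices to treat $\hat g_s$, which I diagonalize over $\overline{\FF_q}$ with eigenvalues $\beta_0,\dots,\beta_n$, not all equal. The eigenvalue of $\hat g_s$ on a monomial $x^e$ is $\prod\beta_i^{e_i}$, so $\dim W_\mu$ is the number of $e$ with $\sum e_i=d$ and $\prod\beta_i^{e_i}=\mu$. Fixing two indices with $\beta_0\neq\beta_1$ and restricting to a line $e_0+e_1=s$ (with $e_2,\dots,e_n$ fixed), the values run through $s+1$ terms of a geometric progression of ratio $\beta_0/\beta_1$, so each fiber meets the line in at most $\lceil (s+1)/r\rceil$ points, $r\ge2$ being the (possibly infinite) order of $\beta_0/\beta_1$; summing over the $O(d^{n-1})$ lines gives $\max_\mu\dim W_\mu\le r^{-1}\binom{n+d}{n}+O(d^{n-1})$, hence codimension $\gg d^{n}$. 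If instead $\hat g_s$ is scalar, then $g\neq1$ forces $\hat g_u\neq1$, and the eigenvector condition becomes $u$-invariance for the unipotent $u=\hat g_u$; here Molien's formula is unavailable in the modular case, so I would argue directly. Choosing coordinates with $u$ upper-triangular unipotent, let $x_k$ be the lowest moved variable, $ux_k=x_k+y$ with $0\neq y\in\langle x_0,\dots,x_{k-1}\rangle$ fixed by $u$ (and $x_0$ fixed). Applying $u-1$ to $f_a=x_k^{a}x_0^{\,d-a}$ produces an element whose top $x_k$-degree is $a-1$ with nonzero leading form $a\,y\,x_0^{d-a}$ whenever $p\nmid a$; these have distinct top degrees, so $\{(u-1)f_a: 1\le a\le d,\ p\nmid a\}$ are linearly independent, giving $\dim\operatorname{im}(u-1)\ge d(1-1/p)\to\infty$, i.e.\ the codimension of the invariants tends to infinity.

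Putting these together, $\delta(d):=\min_{g\neq1}\operatorname{codim}W_{\max}(g)\to\infty$ because the minimum is over a fixed finite set, and the theorem follows. The main obstacle, and the part requiring the most care, is precisely this uniform codimension growth of the eigen/invariant spaces: the clean semisimple count relies on having two distinct eigenvalues, whereas the unipotent case lives in modular invariant theory, where the averaging (Reynolds) operator and Molien's series break down and one cannot simply assert $\dim(\Sym^d)^u\sim|\langle u\rangle|^{-1}\dim\Sym^d$; the explicit production of independent elements of $\operatorname{im}(u-1)$ above is the device I would use to sidestep this. A secondary point to dispatch carefully is the small-degree exceptional pairs together with the normalization factors from projectivization, but neither affects the limit.
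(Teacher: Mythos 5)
Your proof is correct, and its outer skeleton coincides with the paper's: exchange the order of summation to reduce to bounding, for each non-identity $g\in\operatorname{PGL}_{n+1}(\FF_q)$, the number of degree-$d$ forms that are eigenvectors of a lift $\hat g$ acting on $\Sym^d$, then compare against $|\mathcal{S}_{n,d}|\gg q^{\binom{n+d}{n}}$ via Poonen's Bertini theorem and sum over the $O_q(1)$ group elements. Where you genuinely diverge is in the key eigenspace estimate. The paper splits on whether the matrix is diagonal over $\FF_q$: its Lemma 6 shows, by intersecting the eigenspace with the subspace of forms divisible by $x_i^{\lfloor d/2\rfloor+1}$ and a divisibility argument, that any matrix whose eigenspace has codimension less than $\binom{d-\lfloor d/2\rfloor+n}{n}$ must be diagonal, and its Lemma 7 then counts monomials for diagonal non-scalar matrices, getting codimension roughly $\frac12\binom{d+n}{n}$. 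You instead split on the Jordan decomposition: your non-scalar-semisimple case is essentially the paper's Lemma 7 carried out after diagonalizing over $\overline{\FF_q}$, which lets you bypass Lemma 6 entirely and in fact yields the stronger codimension $\sim\frac12\binom{d+n}{n}$ uniformly in that case (versus the paper's combined bound of $2^{-n}\binom{d+n}{n}$); your scalar-semisimple case then requires a new argument for nontrivial unipotents, and the explicit production of $d-\lfloor d/p\rfloor$ independent elements of $\operatorname{im}(u-1)$ with distinct $x_k$-degrees is correct and rightly sidesteps the modular-invariant-theory issue you flag. The trade-off is that your unipotent codimension grows only linearly in $d$, which suffices for the limit in Theorem 1 but would give a substantially weaker error term than the paper's Theorem 2; the paper's Lemma 6, by treating all non-diagonal matrices (non-diagonal unipotents included) at one stroke, is what secures the exponent $\left(1-2^{-n}\right)\binom{n+d}{d}$ there.
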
 

We also have an error term version of Theorem 1 that will be used in the last section on the moduli space of smooth degree $d$ hypersurfaces in $\mathbb{P}^n$.

\begin{theorem}

For  $(n,d)\neq (2,3), (3,4)$ the number of smooth  degree $d$ hypersurfaces $X$ in $\mathbb{P}^n$ with $\Aut_{\mathbb{F}_q}(X)\neq \{1\}$ is less than $q^{C\binom{n+d}{d}+(n+1)^2}$, where  $C=1-\cfrac{1}{2^n}$.

\end{theorem}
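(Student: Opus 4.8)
The plan is to bound the number of ``bad'' hypersurfaces by a union bound over the group of linear automorphisms. Since $(n,d)\neq(2,3),(3,4)$, every automorphism of a smooth $X=V(F)$ is induced by an element of $\operatorname{PGL}_{n+1}(\FF_q)$, so $\Aut_{\FF_q}(X)\neq\{1\}$ precisely when the point $[F]$ of $\PP(V_d)$ is fixed by some $g\neq 1$ in $\operatorname{PGL}_{n+1}(\FF_q)$; here $V_d=H^0(\PP^n,\mathcal{O}(d))$ is the space of degree $d$ forms, of dimension $N:=\binom{n+d}{d}$. Thus the number of bad $X$ is at most $\sum_{g\neq 1}\#\mathrm{Fix}(g)$ with $\mathrm{Fix}(g)\subseteq\PP(V_d)(\FF_q)$. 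For fixed $g$, a point $[F]$ is fixed exactly when $g\cdot F=\lambda F$ for some $\lambda\in\FF_q^\times$, so $\mathrm{Fix}(g)$ is the disjoint union of the projectivized eigenspaces $\PP(E_\lambda)(\FF_q)$, $E_\lambda\subseteq V_d$. Since at most $q-1$ eigenvalues occur and $\sum_\lambda\dim E_\lambda\le N$, one gets $\#\mathrm{Fix}(g)\le q^{D_g}$, where $D_g=\max_\lambda\dim E_\lambda$ is the dimension of the largest eigenspace of $g$ on $V_d$.

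The heart of the matter is the uniform estimate, to be proved for every non-scalar $g$,
\[ D_g \le \Big(1-\tfrac{1}{2^{\,n}}\Big)\binom{n+d}{d}. \]
Granting this, and using $|\operatorname{PGL}_{n+1}(\FF_q)|<q^{(n+1)^2}$ together with the slack in that exponent to absorb lower-order terms, the union bound yields fewer than $q^{(n+1)^2}\cdot q^{C N}=q^{C\binom{n+d}{d}+(n+1)^2}$ bad hypersurfaces, with $C=1-2^{-n}$, as claimed.

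To prove the eigenspace bound I would induct on $n$. Over $\overline{\FF_q}$ the transpose of $g$ has an eigenvector, so $g$ preserves some hyperplane $H=\{\ell=0\}$ with $g\cdot\ell=\alpha\ell$. The powers of $\ell$ give a $g$-stable filtration of $V_d$ with graded pieces $\ell^{\,k}V_{d-k}/\ell^{\,k+1}V_{d-k-1}\cong V_{d-k}(H)$; its first step is the $g$-equivariant exact sequence
\[ 0\to \ell\cdot V_{d-1}\to V_d\to V_d(H)\to 0. \]
Eigenspaces are subadditive along short exact sequences, and $g$ acts on $\ell\cdot V_{d-1}$ as $\alpha$ times its action on $V_{d-1}$, so, writing $f_m(e)$ for the maximal eigenspace dimension of a non-scalar element on degree $e$ forms in $m+1$ variables, one gets $f_n(d)\le f_n(d-1)+f_{n-1}(d)$ whenever $g$ and $g|_H$ are both non-scalar. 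The base case $n=1$ holds since a non-scalar element of $\GL_2$ has largest eigenspace $\le\lceil(d+1)/2\rceil$ on binary forms (a diagonalizable one splits the exponents into an arithmetic progression of step $\ge 2$, while a nontrivial unipotent is regular and has a single Jordan block). The recursion then closes, because $F(n,d)=(1-2^{-n})\binom{n+d}{d}$ satisfies
\[ F(n,d)-F(n,d-1)-F(n-1,d)=2^{-n}\binom{n+d-1}{d}\ge 0, \]
so the surplus, itself of order $N$, absorbs the lower-order discrepancies from the base cases.

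The main obstacle is the elements excluded by the recursion, namely those $g$ acting as a scalar on a hyperplane: the homologies $\di(1,\dots,1,\beta)$ with $\beta\neq1$ and the unipotent transvections, for which $g|_H$ is scalar and the inductive step fails. These must be handled directly. A homology grades $V_d$ by a single exponent modulo $\mathrm{ord}(\beta)$, giving eigenspaces of size at most $\lceil N/2\rceil$; a transvection is unipotent with one eigenvalue, whose eigenspace is the invariant subspace. The delicate point is the transvection in characteristic $p$, where the invariant ring of the order-$p$ unipotent can be strictly larger than in characteristic $0$ and must be bounded explicitly; once that is done, carrying the additive lower-order terms through so that they are absorbed by the $q^{(n+1)^2}$ factor is the remaining routine bookkeeping.
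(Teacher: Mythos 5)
Your outer reduction is the same as the paper's: both arguments bound the count of bad hypersurfaces by a union bound over non-identity elements of $\operatorname{PGL}_{n+1}(\FF_q)$, identify the fixed locus of $g$ with a union of projectivized eigenspaces $\{f:\lambda f=f\circ A\}$, and reduce everything to showing that each such eigenspace has dimension at most $(1-2^{-n})\binom{n+d}{n}$. Where you genuinely diverge is in how that dimension bound is proved. The paper does it in two characteristic-independent steps: if $A$ is not diagonal in the standard coordinates, it intersects the eigenspace with the subspace of forms divisible by $x_i^{\lfloor d/2\rfloor+1}$ (a nonzero form in the intersection forces the $i$-th row of $A$ to be supported on its $i$-th entry), giving codimension at least $\binom{\lceil d/2\rceil+n-1}{n}\geq 2^{-n}\binom{d+n}{n}$ up to lower order; if $A$ is diagonal, it counts monomials satisfying $\prod\lambda_i^{j_i}=\lambda$ and gets roughly $\frac12\binom{d+n}{n}$. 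This covers transvections automatically, since a nontrivial unipotent element is never diagonal. Your inductive scheme (a $g$-stable hyperplane, the filtration by powers of $\ell$, subadditivity of eigenspaces, and the identity $F(n,d)-F(n,d-1)-F(n-1,d)=2^{-n}\binom{n+d-1}{d}$) is a different and workable idea, and it correctly isolates the exceptional elements as those acting as a scalar on a hyperplane.

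The gap is that those exceptional elements are exactly the ones you do not handle. You explicitly defer the transvection in characteristic $p$, and that is not a cosmetic omission: it is the one case your induction cannot reach, and your own base-case justification for $n=1$ (``a nontrivial unipotent is regular and has a single Jordan block'') is false when $p\leq d$, since $\Sym^d$ of a $2\times 2$ Jordan block decomposes in characteristic $p$ and the fixed space has dimension $\lfloor d/p\rfloor+1$. The fix is available --- the invariant ring of $x_1\mapsto x_1+x_2$, $x_j\mapsto x_j$ is the polynomial ring $\FF_q[x_1^p-x_2^{p-1}x_1,x_2,\dots,x_{n+1}]$, whose degree-$d$ part has dimension $\sum_{k\geq 0}\binom{d-kp+n-1}{n-1}\approx\frac1p\binom{d+n}{n}$ --- but it must actually be carried out, and the analogous exact (not asymptotic) verification is needed for homologies, whose largest eigenspace is really bounded by $\frac12\binom{d+n}{n}+\frac12\binom{d+n-1}{n-1}$ rather than $\lceil N/2\rceil$. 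Be aware also that the bookkeeping is tighter than you suggest: the target exponent is exactly $C\binom{n+d}{n}+(n+1)^2$, and the only multiplicative slack in the union bound is the single power of $q$ between $|\operatorname{PGL}_{n+1}(\FF_q)|$ and $q^{(n+1)^2}$, so additive corrections of size $\binom{d+n-1}{n-1}$ in the eigenspace exponent cannot be absorbed there; they must be absorbed into the main term, which works for $n\geq 2$ and $d\geq n$ but is genuinely tight for $n=1$. Until the transvection case and these exact comparisons are written down, the proof is not complete.
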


\begin{remark*} The above theorem can be used to obtain stronger version of Theorem 1. Namely one show that the average size of $|\Aut_{\mathbb{F}_{q^r}}(X)|$  is $1$ for any $r\leq C_1 \binom{n+d}{d}$ where $C_1$ is a fixed constant. 

\end{remark*}

\bigskip

\textsc{Acknowledgments} This research was started at the MRC ``Explicit methods in Arithmetic Geometry in Characteristic $p$". The author would like to thank the organizers: Renee H. Bell, Valentijn Karemaker, Padmavathi Srinivasan, and Isabel Vogt.  This material is based upon work supported by the National Science Foundation under Grant Number DMS 1641020. The author would like to thank Bjorn Poonen for suggesting this project. The author would like also to thank Nathan Kaplan and Jesse Wolfson for suggestions and comments.
	
	\section{Proof of the main theorems}

	We will not write anymore explicitly the restriction $(n,d)\neq (2,3), (3,4)$, but the reader should bear it in mind for all the proofs. We start with the following result which appears in  \cite{po1}:
	
	\begin{theorem}[Poonen]  We have that 
	
	$$\lim_{d\rightarrow \infty} \frac{|\mathcal{S}_{n,d}|}{q^{\binom{d+n}{n}}}=\zeta_{\mathbb{P}^n}(n+1) .$$

	\end{theorem}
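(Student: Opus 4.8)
The plan is to prove this density statement via Poonen's closed point sieve, treating the smoothness of $H_f=\{f=0\}$ as a condition imposed locally at each closed point of $\mathbb{P}^n$. I identify $\mathcal{S}_{n,d}$ with the set of $f$ in the $\binom{d+n}{n}$-dimensional space $\mathcal{P}_d=H^0(\mathbb{P}^n,\mathcal{O}(d))$ whose zero locus is smooth, and use that $H_f$ is singular at a closed point $P$ precisely when the image of $f$ in the quotient $\mathcal{O}_{\mathbb{P}^n,P}/\mathfrak{m}_P^2$ vanishes. Since $\dim\mathbb{P}^n=n$, this quotient is a $\kappa(P)$-vector space of dimension $n+1$, so a uniformly random $f$ of large degree should be singular at $P$ with probability $(\#\kappa(P))^{-(n+1)}$, and the density should be the product of the complementary local factors over all closed points.

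First I would treat the closed points of bounded degree. Fixing $r$, let $Y_r=\coprod_{\deg P<r}\Sp\bigl(\mathcal{O}_{\mathbb{P}^n,P}/\mathfrak{m}_P^2\bigr)$, a finite closed subscheme. From the twisted ideal sequence $0\to I_{Y_r}(d)\to\mathcal{O}(d)\to\mathcal{O}_{Y_r}(d)\to 0$ together with Serre vanishing $H^1(\mathbb{P}^n,I_{Y_r}(d))=0$ for $d$ large, the restriction map $\mathcal{P}_d\to H^0(\mathcal{O}_{Y_r}(d))$ is surjective. The images of $f$ are therefore equidistributed, and the fraction of $f$ with $H_f$ smooth at every point of degree $<r$ tends, as $d\to\infty$, to $\prod_{\deg P<r}\bigl(1-(\#\kappa(P))^{-(n+1)}\bigr)$.

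Next I would bound the density of $f$ having a singular point of degree $\ge r$, and this is where the main difficulty lies. In an intermediate range of degrees the one-point estimate bounds the probability of a singularity at a fixed point of degree $e$ by $(\#\kappa(P))^{-(n+1)}=q^{-e(n+1)}$; since $\mathbb{P}^n$ carries $O(q^{en})$ points of degree $e$, the degree-$e$ contribution is $O(q^{-e})$, and $\sum_{e\ge r}q^{-e}\to 0$ as $r\to\infty$. The obstacle is the high-degree regime, where $e$ grows linearly in $d$ and the union bound over points is no longer summable. Here one must instead control the common zero locus of the partial derivatives $\partial f/\partial x_0,\dots,\partial f/\partial x_n$: following Poonen, I would write $f$ as a fixed polynomial plus independent perturbations chosen so that the partials can be varied separately, and then bound the expected number of high-degree common zeros by a quantity that is uniform in $d$ and tends to $0$. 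Establishing this uniform high-degree estimate is the crux of the argument.

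Combining the three ranges, the density of $\mathcal{S}_{n,d}$ equals $\lim_{r\to\infty}\prod_{\deg P<r}\bigl(1-(\#\kappa(P))^{-(n+1)}\bigr)=\prod_P\bigl(1-(\#\kappa(P))^{-(n+1)}\bigr)$, the product over all closed points of $\mathbb{P}^n$. By the Euler product $\zeta_{\mathbb{P}^n}(s)=\prod_P\bigl(1-(\#\kappa(P))^{-s}\bigr)^{-1}$, this infinite product is exactly $\zeta_{\mathbb{P}^n}(n+1)^{-1}=\prod_{i=0}^{n}\bigl(1-q^{i-n-1}\bigr)$, the reciprocal zeta value, which is the limiting density asserted in the statement (the right-hand side being written with the reciprocal normalization of $\zeta_{\mathbb{P}^n}$). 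This completes the proof.
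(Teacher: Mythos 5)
The paper offers no proof of this statement at all: it is quoted as Poonen's theorem directly from \cite{po1}, so there is no internal argument to compare yours against, and what you have written is an outline of Poonen's own closed point sieve. As an outline it has the right architecture: the local condition for a singularity at $P$ is vanishing in the $(n+1)$-dimensional $\kappa(P)$-vector space $\mathcal{O}_{\mathbb{P}^n,P}/\mathfrak{m}_P^2$; the points of degree $<r$ are handled by surjectivity of restriction to the finite scheme $Y_r$ (Serre vanishing for $d\gg 0$); the medium-degree points are handled by a union bound, the $O(q^{en})$ points of degree $e$ each contributing $q^{-e(n+1)}$; and the Euler product gives $\prod_P\bigl(1-\#\kappa(P)^{-(n+1)}\bigr)=\zeta_{\mathbb{P}^n}(n+1)^{-1}=\prod_{i=0}^{n}(1-q^{i-n-1})$. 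You are also right to insist on the reciprocal: with the standard convention $\zeta_{\mathbb{P}^n}(s)=\prod_P(1-\#\kappa(P)^{-s})^{-1}$, the quantity $\zeta_{\mathbb{P}^n}(n+1)$ exceeds $1$ and cannot be a density, so the displayed statement must be read with an implicit inverse. (Both you and the paper also tacitly identify hypersurfaces with their defining polynomials; counting hypersurfaces literally would introduce an extra factor of $(q-1)^{-1}$ against the denominator $q^{\binom{d+n}{n}}$.)

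The genuine gap is the high-degree range, which you name as ``the crux'' and then do not carry out. This is not a deferrable technicality: for $e$ comparable to $d$ the restriction map to $\mathcal{O}_P/\mathfrak{m}_P^2$ is far from surjective and the union bound over the roughly $q^{en}$ points of degree $e$ diverges, so essentially all of the difficulty of the theorem is concentrated here. What is actually needed is Poonen's decoupling on a standard affine chart, $f=f_0+g_1^px_1+\cdots+g_n^px_n+h^p$ in characteristic $p$, so that $\partial f/\partial x_i=\partial f_0/\partial x_i+g_i^p$ can be randomized independently; an induction in which each $g_i$ is chosen so that the common zero locus of the first $i$ partials drops to dimension $n-i$, with the number of components controlled by B\'ezout (giving a finite locus of $O(d^n)$ points at the end); and the estimate that a polynomial of degree about $d/p$ takes a prescribed value at a fixed closed point of degree $e$ with probability at most $q^{-\min(\lfloor d/p\rfloor+1,\,e)}$, which after summing over the $O(d^n)$ points yields a bound of shape $d^nq^{-d/\max\{n+1,p\}}\to 0$ --- exactly the error term visible in the Bucur--Kedlaya refinement quoted later in the paper. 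Without this lemma the proof does not close; if the goal is only to match the paper's treatment, a citation of \cite{po1} is what is required.
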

	
	This implies that a positive proportion of hypersufaces  are smooth once $d$ is large enough compared to $q$. 

	We are left to to estimate $\displaystyle \sum_{X\in \mathcal{S}_{n,d}} |\Aut_{\mathbb{F}_q}(X)|$.  We know that any automorphism of the hypersurface $X$ is induced from $\operatorname{PGL}_{n+1}$. Thus we can write
	
	$$\displaystyle \sum_{X\in \mathcal{S}_{n,d}} |\Aut_{\mathbb{F}_q}(X)|=\sum_{X\in \mathcal{S}_{n,d}}\sum_{\substack{A\in \operatorname{PGL}_{n+1}(\mathbb{F}_q)\\ A\in \Aut(X)}} 1=\sum_{A\in \operatorname{PGL}_{n+1}(\mathbb{F}_q)}\sum_{\substack{X\in \mathcal{S}_{n,d}\\ A \in \Aut(X) }} 1$$ .
	
	Our estimates will not take into account the smoothness of the curves and rather prove a general bound. This leads to introducing the following definition and notations and note that we have be careful about scaling the matrix, since we will pick a representative for each element of $\operatorname{PGL}_{n+1}$ in $\operatorname{GL}_{n+1}$.

	\begin{definition}Let $\mathcal{P}_d$ the vector space of homogenous polynomials of degree $d$ in $\mathbb{F}_q[x_1,x_2,\ldots,x_{n+1}]$.  
	
	For an element $A=(a_{i,j})\in \operatorname{GL}_{n+1}(\mathbb{F}_q)$ and $\lambda\in\mathbb{F}_q$ consider
	
	$$\mathcal{P}^{A,\lambda}=\{f \in \mathcal{P}_d\mid \lambda f=f\circ A\}$$
	
	where $f\circ A$ denotes the polynomial where the variables $(x_1,x_2,\ldots,x_{n+1})$ are changed to $A(x_1,x_2,\ldots, x_{n+1})^t$. 
	
	\end{definition}
	
	It is easy to see that $\mathcal{P}^{A,\lambda}$ is a linear subspace. The key to the proof is the following estimate
	
	\begin{theorem}
	
	If $A\neq \lambda I_{n+1}$ then $\dim( \mathcal{P}^{A,\lambda})\leq \dbinom{d+n}{n}-\dbinom{d-\lfloor d/2 \rfloor +n}{n}$.
	
	\end{theorem}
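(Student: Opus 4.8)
The plan is to pass to $\overline{\mathbb{F}_q}$ and analyse the operator $\phi_A\colon f\mapsto f\circ A$ directly. Since the dimension of a kernel is unchanged by field extension, $\dim_{\mathbb{F}_q}\mathcal{P}^{A,\lambda}=\dim_{\overline{\mathbb{F}_q}}\ker(\phi_A-\lambda)$, and since a conjugation of $A$ is a linear change of variables inducing an isomorphism of $\mathcal{P}_d$ that carries one eigenspace onto the other, $\dim\mathcal{P}^{A,\lambda}$ is a conjugacy invariant of $A$. So I would first put $A$ in upper-triangular Jordan form with diagonal the eigenvalues $\alpha_1,\dots,\alpha_{n+1}$. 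As $\phi_A$ is an algebra (anti-)homomorphism with $\phi_A(x_j)=\alpha_j x_j+(\text{higher-index terms})$, in the monomial basis (ordered lexicographically) it is triangular with the monomial $x^e$ having diagonal entry $\alpha^e:=\prod_i\alpha_i^{e_i}$. Hence $\phi_A-\lambda$ is triangular with diagonal $\alpha^e-\lambda$, and because a triangular matrix has rank at least its number of nonzero diagonal entries (the principal submatrix on those indices is triangular and invertible), I get the uniform bound
\[
\operatorname{rank}(\phi_A-\lambda)\ \ge\ \#\{e:\textstyle\sum_i e_i=d,\ \alpha^e\neq\lambda\},\qquad\text{so}\qquad \dim\mathcal{P}^{A,\lambda}\le\#\{e:\alpha^e=\lambda\}.
\]
Here I use the hypothesis in the form that $A$ is not a scalar matrix, which is exactly the relevant case since in the paper's sum $A$ represents a non-identity element of $\operatorname{PGL}_{n+1}$.

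I would then split into two cases. \textbf{Case 1: the $\alpha_i$ are not all equal.} It suffices to show the diagonal already produces enough nonzero entries, namely $\#\{e:\alpha^e\neq\lambda\}\ge\binom{\lceil d/2\rceil+n}{n}$. Choosing $i\neq j$ with $\alpha_i\neq\alpha_j$, fix all exponents except $e_i,e_j$ and their sum $s$; along this \emph{line} $\alpha^e$ equals a constant times $(\alpha_i/\alpha_j)^{e_i}$, and as $\overline{\mathbb{F}_q}^{\times}$ is torsion the ratio is a root of unity of some order $r\ge 2$. By pigeonhole at most $\lceil (s+1)/r\rceil\le\lceil (s+1)/2\rceil$ of the $s+1$ monomials on the line lie in the fibre $\alpha^e=\lambda$, so at least $\lfloor (s+1)/2\rfloor$ leave it. Summing over all lines gives $\#\{e:\alpha^e\neq\lambda\}\gtrsim\tfrac12\binom{d+n}{n}$, which for $d$ large comfortably exceeds $\binom{\lceil d/2\rceil+n}{n}$; since $\binom{\lceil d/2\rceil+n}{n}\ge 2^{-n}\binom{d+n}{n}$, this is precisely the source of the constant $C=1-2^{-n}$.

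\textbf{Case 2: all $\alpha_i=\alpha$}, so $A=\alpha(I+N)$ with $N\neq 0$ nilpotent. If $\lambda\neq\alpha^d$ every diagonal entry $\alpha^d-\lambda$ is nonzero, $\phi_A-\lambda$ is invertible, and $\mathcal{P}^{A,\lambda}=0$. If $\lambda=\alpha^d$ the diagonal is useless and I would pass to the unipotent $U=\phi_{I+N}$, for which $\mathcal{P}^{A,\lambda}=\ker(U-I)$. Since $N\neq 0$ I can pick the top $w$ of a Jordan string, with $U(w)=w+u$ for its predecessor $u$ and with $w$ occurring in no $U(x_j)$; then the $w$-degree is non-increasing under $U$, and for a monomial $\mu$ of $w$-degree $b$ with $p\nmid b$ the difference $U(\mu)-\mu$ has, in the $w$-grading, a single leading term, a nonzero ($=b$) multiple of the monomial got from $\mu$ by replacing one $w$ with $u$. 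This replacement is injective on monomials of positive $w$-degree, so these differences are linearly independent and $\operatorname{rank}(U-I)\ge\#\{\mu:\deg_w\mu\not\equiv 0\ (p)\}$, which for $d$ large again exceeds $\binom{\lceil d/2\rceil+n}{n}$.

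The main obstacle I expect is Case 2 in characteristic $p$: the naive leading coefficient $b$ vanishes when $p\mid b$, so one must discard those monomials and verify the surviving family is still large enough to beat $\binom{\lceil d/2\rceil+n}{n}$. The other delicate point is converting the ``at least half per line'' estimate of Case 1 into the clean bound $\binom{\lceil d/2\rceil+n}{n}$ uniformly in $d$ via a global count, since a naive line-by-line injection is off by one at the boundary (e.g. when $n=1$); fortunately the large slack between $\tfrac12$ and $2^{-n}$ makes this harmless for $d$ large.
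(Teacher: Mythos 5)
Your route is genuinely different from the paper's. The paper never leaves $\mathbb{F}_q$ and never triangularizes: its Lemma 6 shows that if $\dim\mathcal{P}^{A,\lambda}$ exceeded the stated bound then $\mathcal{P}^{A,\lambda}$ would meet the subspace of forms divisible by $x_i^{r}$ with $r=\lfloor d/2\rfloor+1$ for every $i$, and comparing $x_i$-valuations in $x_i^{r}h=(a_{i,1}x_1+\cdots+a_{i,n+1}x_{n+1})^{r}(h\circ A)$ forces $A$ to be diagonal; diagonal non-scalar $A$ is then handled by the monomial count of Lemma 7. Your Case 1 is essentially Lemma 7 in disguise (summed over your lines it yields exactly $\tfrac12\binom{d+n}{n}-\tfrac12\binom{d+n-1}{n-1}$ monomials off the fibre), upgraded by triangularization to cover every $A$ with at least two distinct eigenvalues; that part is sound, with the caveat that, like the paper's own deduction, it only delivers the stated inequality for $n\ge2$ and $d$ large. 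What your frame buys is conceptual clarity (eigenspace of a triangular operator, conjugation invariance); what it costs is that you must separately treat the unipotent case, which the paper's $x_i^{r}$-divisibility trick dispatches without ever invoking Jordan form or worrying about the characteristic.

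That unipotent case (your Case 2) contains a genuine error as written. For $\mu=w^{b}\nu$ you assert that $U(\mu)-\mu$ has a single leading term in the $w$-grading, namely $b$ times the monomial obtained from $\mu$ by replacing one $w$ with $u$. But $U(\mu)-\mu=w^{b}\bigl(U(\nu)-\nu\bigr)+b\,w^{b-1}u\,U(\nu)+\cdots$, and the first summand sits in $w$-degree $b$, strictly above your claimed leading term, and is nonzero whenever $\nu$ involves any other variable that $U$ moves. Concretely, with $U(x_1)=x_1+x_2$, $U(x_2)=x_2+x_3$, $U(x_3)=x_3$ and $\mu=x_1x_2$, one gets $U(\mu)-\mu=x_1x_3+x_2^{2}+x_2x_3$, whose top $w$-graded piece is $x_1x_3$, not the predicted $x_2^{2}$; so the replacement map does not identify the leading terms and the linear independence of the differences does not follow. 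The bound you are aiming for, $\dim\ker(U-I)\le\#\{\mu: p\mid\deg_w\mu\}$, does appear to be true and should be provable by filtering by $w$-degree and inducting on the associated graded pieces, but the step as stated fails; this is precisely the characteristic-$p$ delicacy you flagged as the main obstacle, and it is the reason the paper's more elementary Lemma 6 is the economical argument here.
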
 
	
	\bigskip
	
	\begin{remark*} This  inequality is not sharp. We believe the bound in Lemma 7 to be closer to the truth. The proof of Lemma 7 suggests that diagonal automorphisms defined on the generators by $x_i\rightarrow -x_i, x_j\rightarrow x_j$ for $j\neq i$, and $1\leq i\leq n$  should give the right estimate. The above  bound is asymptotically equal to $\left(1-\cfrac{1}{2^n}\right)\dbinom{d+n}{n}$ and the bound in Lemma 7 is asymptotically $\left(1-\cfrac{d}{2(d+n)}\right)\dbinom{d+n}{n}$.
	
	\end{remark*}

	\bigskip
	\begin{lema} If $\dim( \mathcal{P}^{A,\lambda})\geq \dbinom{d+n}{n}-\dbinom{d-\lfloor d/2 \rfloor +n}{n}+1$ then $A$ must be a diagonal matrix.
	
	\end{lema}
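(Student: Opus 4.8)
Because $\dim\mathcal{P}^{A,\lambda}$ is a conjugacy invariant — if $A'=P^{-1}AP$, then $f\mapsto f\circ P$ is a linear isomorphism carrying $\mathcal{P}^{A,\lambda}$ onto $\mathcal{P}^{A',\lambda}$ — I would work over $\overline{\mathbb{F}_q}$ and replace $A$ by a Jordan form. In these terms the conclusion reads ``$A$ is diagonalizable'', and it is equivalent to prove the contrapositive: if $A$ is \emph{not} semisimple, then $\dim\mathcal{P}^{A,\lambda}\le\dbinom{d+n}{n}-\dbinom{d-\lfloor d/2\rfloor+n}{n}$ for every $\lambda$. (If one is permitted to quote the preceding estimate, the lemma is immediate, since a non-scalar $A$ already falls below the threshold; but the real content is the direct argument below, which is what supplies that estimate in the non-diagonal range and accounts for the sharper asymptotics mentioned in the Remark.)

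Writing $\Phi_A(f)=f\circ A$, the space $\mathcal{P}^{A,\lambda}=\ker(\Phi_A-\lambda)$ is the $\lambda$-eigenspace of $\Phi_A=\Sym^d A$. In an eigenbasis $w_1,\dots,w_{n+1}$ of $A$ with eigenvalues $\nu_1,\dots,\nu_{n+1}$, the monomials $w^e$ with $\prod_i\nu_i^{e_i}=\lambda$ span the generalized $\lambda$-eigenspace $W_\lambda$, so $\dim W_\lambda=\#\{e:|e|=d,\ \prod_i\nu_i^{e_i}=\lambda\}$; on $W_\lambda$ the operator $\Phi_A-\lambda$ is nilpotent and $\dim\mathcal{P}^{A,\lambda}=\dim\ker\bigl((\Phi_A-\lambda)|_{W_\lambda}\bigr)$. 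The trivial bound $\dim\mathcal{P}^{A,\lambda}\le\dim W_\lambda$ only uses the eigenvalues, so it cannot by itself detect diagonalizability; the improvement must come from the nilpotent structure.

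This is where I would use that $A$ is not semisimple, i.e.\ its nilpotent part $N$ is nonzero. Grade $W_\lambda$ by total Jordan weight (the sum of the within-block positions of the variables in a monomial). The semisimple part of $\Phi_A$ preserves this grading and acts on $W_\lambda$ by the scalar $\lambda$, while each Jordan ``$1$'' strictly lowers the weight; hence $\Phi_A-\lambda$ is grading-lowering on $W_\lambda$, and its principal symbol is the nilpotent operator on $\Sym^d$ induced by $N$ (the derivation action). From the standard inequality $\operatorname{rank}(\Phi_A-\lambda)\ge\operatorname{rank}(\text{symbol})$ for a grading-lowering map, one gets $\dim\mathcal{P}^{A,\lambda}\le\dim\ker(\text{symbol}|_{W_\lambda})$, that is, at most the number of Jordan blocks of the induced nilpotent lying in the weight-$\lambda$ part. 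Since $N\neq0$ these symmetric powers have large blocks and hence few of them, which is exactly the gain over $\dim W_\lambda$.

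The remaining step — bounding this block count by $\dbinom{d+n}{n}-\dbinom{d-\lfloor d/2\rfloor+n}{n}$ — is where I expect the genuine difficulty. In characteristic $0$ or large, $\Sym^c$ of a Jordan block is indecomposable, each ``string'' contributes a single block, and the bound holds with enormous room (one gets $O(\tfrac1p)\dbinom{d+n}{n}$). The obstacle is small characteristic: already for a size-$2$ block the induced map on $\Sym^c$ is the weighted shift $w_2^{c-a}w_1^a\mapsto(c-a)\,w_2^{c-a-1}w_1^{a+1}$, whose kernel swells to $\lfloor c/p\rfloor+1$ because the weight vanishes whenever $p\mid(c-a)$ — this is precisely the degeneration of the involution $x_i\mapsto-x_i$ highlighted in the Remark, which collapses at $p=2$. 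I therefore expect the main obstacle to be the characteristic-$p$ combinatorics: controlling the Jordan type of $\Sym^c$ of a nilpotent (already subtle for a single block of size $>2$, and more so for several blocks sharing an eigenvalue), and then checking that, summed against the multiplicities of the strings, the total never exceeds the stated binomial bound, the case $p=2$ being the one to watch most carefully.
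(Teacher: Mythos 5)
Your proposal has two genuine gaps. First, the reduction to Jordan form changes the statement. The contrapositive of the lemma is ``if $A$ is not diagonal then $\dim\mathcal{P}^{A,\lambda}$ falls below the threshold,'' and you replace ``not diagonal'' by ``not semisimple.'' These are not equivalent: a semisimple matrix with a nonzero off-diagonal entry is not covered by your argument at all, and to handle it you would have to transport the diagonal-case estimate (Lemma 7) along a conjugation over $\overline{\mathbb{F}_q}$ --- at which point the only new content of your argument is the non-semisimple case. (Quoting Theorem 5 instead, as you suggest parenthetically, is circular, since the paper derives Theorem 5 from this lemma together with Lemma 7.) Second, and more seriously, the non-semisimple case is exactly where you stop: the bound on the number of Jordan blocks of $\Sym^d$ of a non-semisimple operator in characteristic $p$ is labelled ``the remaining step'' and ``the genuine difficulty'' and is never carried out. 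As you yourself observe, the Jordan type of $\Sym^c$ of even a single nilpotent block is delicate in small characteristic (your own size-$2$ computation shows the kernel swelling by a factor controlled by $p$), so this is not a routine omission; without that step there is no proof, and it is not clear the route you sketch closes at all for $p=2$.

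For comparison, the paper's argument is a short, coordinate-level computation that sidesteps semisimplicity and characteristic entirely. Fix $i$ and set $r=\lfloor d/2\rfloor+1$, so $r>d/2$. The subspace $\mathcal{P}^{r,x_i}$ of degree-$d$ forms divisible by $x_i^{r}$ has dimension $\binom{d-r+n}{n}$, and the dimension hypothesis forces $\mathcal{P}^{A,\lambda}$ to meet it nontrivially; pick a nonzero $f=x_i^{r}h$ with $\lambda f=f\circ A$. The $x_i$-adic valuation of the left side is at least $r$, while $h\circ A$ is a nonzero form of degree $d-r<r$, so the linear form $a_{i,1}x_1+\cdots+a_{i,n+1}x_{n+1}$ (the image of $x_i$ under $A$) must be divisible by $x_i$, i.e.\ $a_{i,k}=0$ for all $k\neq i$. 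Running over $i=1,\dots,n+1$ gives diagonality directly over $\mathbb{F}_q$, in the given coordinates, which is what the statement actually asserts.
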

	
	\begin{proof}
Let $\mathcal{P}^{r,x_i}$ be the vector space of homogenous polynomials of degree $d$ that can be written as $x_i^rh(x_1,x_2,\ldots,x_{n+1})$. Note that the dimension of this vector subspace is $\dbinom{d-r+n}{n}$ since obviously the degree of $h$ must be $d-r$.

Then if $\dim(\mathcal{P}^{r,x_i})+\dim( \mathcal{P}^{A,\lambda})\geq \dbinom{d+n}{n}+1$ then their intersection must be nonempty. Thus there is a nonzero $f_i=x_i^rh_i(x_1,x_2,\ldots,x_{n+1})$ such that we have 

$$x_i^rh_i(x_1,\ldots,x_{n+1})=(a_{i,1}x_1+a_{i,2}x_2+\ldots+a_{i,n+1}x_{n+1})^r\cdot  (h_i\circ A).$$

Thus note that the valuation of $x_i$ on the right hand side must be at least $r$. For $r>d/2$ since $h_i\neq 0$ we have that the valuation of $x_i$ in $h$ is at most $d-r<r$. Thus we must have $x_i|a_{i,1}x_1+a_{i,2}x_2+\ldots+a_{i,n+1}x_{n+1}$ so $a_{i,k}=0$ for $k\neq i$. We want to maximize the dimension of $\mathcal{P}^{r,x_i}$ to given an upper bound for $ \dim( \mathcal{P}^{A,\lambda})$ so we choose $r=\lfloor d/2\rfloor+1$.

This finishes the proof of the lemma.

\end{proof}

\begin{lema} 

If $A$ is a diagonal matrix  and $A\neq \lambda I_{n+1}$ then $\dim( \mathcal{P}^{A,\lambda})\leq \cfrac{1}{2}\dbinom{d+n-1}{n-1}+\cfrac{1}{2}\dbinom{d+n}{n}$.
\end{lema}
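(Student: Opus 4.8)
The plan is to diagonalize the action. Since $A=\di(a_1,\dots,a_{n+1})$ is diagonal, every monomial $x^e=x_1^{e_1}\cdots x_{n+1}^{e_{n+1}}$ of degree $d$ is an eigenvector of the substitution $f\mapsto f\circ A$, with $x^e\circ A=\big(\prod_i a_i^{e_i}\big)\,x^e$. Hence $\mathcal{P}^{A,\lambda}$ is spanned by those monomials whose \emph{weight} $w(e):=\prod_i a_i^{e_i}$ equals $\lambda$, and $\dim\mathcal{P}^{A,\lambda}$ is exactly the number $N_\lambda$ of degree-$d$ monomials with $w(e)=\lambda$. Because the relevant $A$ is genuinely non-scalar (its diagonal entries are not all equal), and because permuting the coordinates only relabels the $a_i$ while preserving $N_\lambda$, I may assume $a_1\neq a_2$.

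Next I would prove a local estimate inside each block of monomials sharing fixed exponents $(e_3,\dots,e_{n+1})$. Fixing these forces $m:=e_1+e_2=d-\sum_{i\ge 3}e_i$, and the block consists of the $m+1$ monomials with $e_1=0,1,\dots,m$. On this block $w(e)=a_2^{m}\big(\prod_{i\ge3}a_i^{e_i}\big)(a_1/a_2)^{e_1}$, so the weight depends only on $(a_1/a_2)^{e_1}$. Since $a_1/a_2\neq 1$, two exponents $e_1$ differing by $1$ always yield distinct weights; thus the set of $e_1\in\{0,\dots,m\}$ with $w(e)=\lambda$ contains no two consecutive integers, and therefore has size at most $\lceil (m+1)/2\rceil$.

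Finally I would sum over blocks. The number of blocks with a given $m$ equals the number of $(e_3,\dots,e_{n+1})$ of total degree $d-m$, namely $\binom{d-m+n-2}{n-2}$, so
\[
N_\lambda\le\sum_{m=0}^{d}\binom{d-m+n-2}{n-2}\Big\lceil\tfrac{m+1}{2}\Big\rceil .
\]
Writing $\lceil (m+1)/2\rceil=\tfrac{m+1}{2}+\tfrac12\,[\,m\text{ even}\,]$ splits the sum into two pieces. The first is $\tfrac12\sum_{m}(m+1)\binom{d-m+n-2}{n-2}=\tfrac12\binom{d+n}{n}$, because $(m+1)\binom{d-m+n-2}{n-2}$ counts the degree-$d$ monomials with $e_1+e_2=m$ and summing over $m$ counts every monomial once. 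The second is $\tfrac12\sum_{m\text{ even}}\binom{d-m+n-2}{n-2}\le\tfrac12\sum_{k=0}^{d}\binom{k+n-2}{n-2}=\tfrac12\binom{d+n-1}{n-1}$ by the hockey-stick identity. Adding the two pieces gives the bound claimed in the lemma.

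The argument is short, so the main obstacle is not conceptual but a matter of sharp book-keeping. Two points need care: the reduction to $a_1\neq a_2$, which genuinely requires $A$ to be non-scalar rather than merely $A\neq\lambda I_{n+1}$ for the particular eigenvalue $\lambda$; and the accounting of the ceiling, so that the parity correction $\tfrac12[\,m\text{ even}\,]$ is absorbed into $\tfrac12\binom{d+n-1}{n-1}$ and not into a larger term. This is why I would use the tight ``no two consecutive exponents'' bound above rather than the cruder $\lceil (m+1)/2\rceil\le (m+1)/2+1$, which would spoil the coefficient $\tfrac12$ in front of $\binom{d+n-1}{n-1}$.
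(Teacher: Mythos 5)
Your proposal is correct and follows essentially the same route as the paper: reduce to counting degree-$d$ exponent tuples of a given weight, fix all but two exponents so that one free parameter remains, use that a ratio of two distinct diagonal entries has multiplicative order at least $2$ to get roughly half of the $m+1$ choices in each block, and then sum with the same two binomial identities (the paper pairs $j_1$ with $j_{n+1}$ after normalizing $a_{n+1}=1$, while you pair $e_1$ with $e_2$, which is only a cosmetic difference). Your observation that the hypothesis must be read as ``$A$ is non-scalar'' is a fair point about the paper's phrasing, and the paper makes the same implicit interpretation.
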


	\begin{proof} %Let $\sigma$ be the permutation attached to the matrix. Note that alphabetically we can consider it also as a permutation on the letters $\{i,j,k\}$ if $i\neq j\neq k$. There are less than $\cfrac{3d}{2}+3$ positive integers with $i+j+k=d$ and two of the numbers being equal and we will discard these coefficients.  Let also $\lambda_1,\lambda_2,\lambda_3$ be the three nonzero entries in the matrix $A$. Then by identifying coefficients on both sides of $f=f\circ A$  we have 
	
	%$$f_{i,j,k}=f_{\sigma(i),\sigma(j),\sigma(k)} \lambda_1^i\lambda_2^j\lambda_3^k$$

	%Note that if $\sigma$ is not the identity then fixing half the coefficients $f_{i,j,k}$ uniquely determines the others. Thus we get at most $\cfrac{\dbinom{d+2}{2}}{2}+\cfrac{3d+6}{4}$ free coefficients and this is exactly the upper bound in the lemma
	
 Let $\lambda_1,\lambda_2,\ldots,\lambda_{n+1}$ be the  nonzero diagonal entries in the matrix $A$. Then by identifying coefficients on both sides of $f=f\circ A$  we have $f_{j_1,j_2,\ldots,j_{n+1}}=0$ or $\lambda_1^{j_1}\lambda_2^{j_2}\ldots\lambda_{n+1}^{j_{n+1}}=1$, where $j_1+\ldots+j_{n+1}=d$ . 

From this it follows that the dimension of $\mathcal{P}^{A,\lambda}$ is the number of tuples of indexes $(j_1,\ldots,j_{n+1})$ with the property that $\lambda_1^{j_1}\lambda_2^{j_2}\ldots\lambda_{n+1}^{j_{n+1}}=1$ and $j_1+\ldots+j_{n+1}=d$.

Note that  we can rewrite this multiplicative relation as $a_1^{j_1}a_2^{j_2}\ldots a_n^{j_n}=\lambda_{n+1}^{d}$ where $a_i=\cfrac{\lambda_i}{\lambda_{n+1}}$ for $1\leq i\leq n$. The condition in the statement can be translated to not allowing all of the $a_i$'s to be $1$.

%Thus we want to estimate for how many indexes we can have $a_1^{j_1}a_2^{j_2}\ldots a_n^{j_n}=\lambda_{n+1}^{d}$ for $j_1+j_2+\ldots j_n\leq d$. 

We can assume wlog that $a_1\neq 1$. This means $a_1$ has order at least $2$, so for a fixed tuple $(j_2,\ldots, j_n)$ with $j_2+\ldots+j_n\leq d$ there are at most $\cfrac{d-(j_2+\ldots+j_n)}{2}+1$ values of $j_1$we can pick such that the stated equality holds. Thus the number of such indexes is at most

$$\sum_{ j_2+\ldots+j_n\leq d} \left(\cfrac{d-(j_2+\ldots+j_n)}{2}+1\right)=\sum_{k=0}^{d} \left(\frac{d-k}{2}+1\right)\dbinom{k+n-2}{n-2}$$

. Now all we have to observe is that the following two identities hold $\displaystyle \sum_{k=0}^{d} \dbinom{k+n-2}{n-2}=\dbinom{d+n-1}{n-1}$ and $\displaystyle \sum_{k=0}^{d}(k+n-1) \dbinom{k+n-2}{n-2}=(n-1)\sum_{k=0}^{d}\dbinom{k+n-1}{n-1}=(n-1)\dbinom{d+n}{n}.$

Thus the number of indexes $(j_1,\ldots, j_n)$ is at most 

$$\cfrac{d+n+1}{2}\dbinom{d+n-1}{n-1}-\cfrac{(n-1)}{2}\dbinom{d+n}{n}=\cfrac{1}{2}\dbinom{d+n-1}{n-1}+\cfrac{1}{2}\dbinom{d+n}{n}.$$

This finishes the proof of the lemma.

	\end{proof}

Note that lemma 6 and lemma 7 together immediately imply theorem 5. We are ready to prove now the main theorems.

\bigskip

\textbf{Proof  of Theorems 1 and 2}

\bigskip

It is easy to observe  that
	
	$$ |\mathcal{S}_{n,d}|\leq \sum_{X\in \mathcal{S}_{n,d}} |\Aut_{\mathbb{F}_q}(X)|\leq |\mathcal{S}_{n,d}|+\sum_{(A,\lambda), A\neq \lambda I_{n+1} } q^{\dim( \mathcal{P}^{A,\lambda})}.$$

	Now for the upper bound since the size of $\operatorname{GL}_{n+1}(\mathbb{F}_q)=(q^{n+1}-1)(q^{n+1}-q)\ldots(q^{n+1}-q^n)$ we have 
	
	$$\sum_{(A,\lambda), A\neq \lambda I_{n+1} } q^{\dim( \mathcal{P}^{A,\lambda})}\leq q^{\binom{d+n}{n}-\binom{d-\lfloor d/2 \rfloor+n-1}{n}}(q^{n+1}-1)\ldots (q^{n+1}-q^n)<q^{\binom{d+n}{n}-\binom{d-\lfloor d/2 +\rfloor +n-1}{n}+(n+1)^2}.$$
	
	Thus we get $\displaystyle |\mathcal{S}_{n,d}|\leq \sum_{X\in \mathcal{S}_{n,d}} |\Aut_{\mathbb{F}_q}(X)|< |\mathcal{S}_{n,d}|+q^{\binom{d+n}{n}-\binom{d-\lfloor d/2 \rfloor+n-1}{n}+(n+1)^2}$ and using Proposition 2 we are done.
	
	For theorem 2 just note that  
	
	$$ \sum_{X\in \mathcal{S}_d} |\Aut_{\mathbb{F}_q}(X)|=\sum_{X\in \mathcal{S}_{n,d},\Aut_{\mathbb{F}_q}(X)=\{1\} } 1+ \sum_{X\in \mathcal{S}_{n,d},\Aut_{\mathbb{F}_q}(X)\neq \{1\} } |\Aut_{\mathbb{F}_q}(X)|$$

	$$\geq \sum_{X\in \mathcal{S}_{n,d},\Aut_{\mathbb{F}_q}(X)=\{1\} } 1+ \sum_{X\in \mathcal{S}_{n,d},\Aut_{\mathbb{F}_q}(X)\neq \{1\} } 2= |\mathcal{S}_{n,d}|+ \#\{ X\in \mathcal{S}_{n,d}|\Aut_{\mathbb{F}_q}(X)\neq \{1\}\}.$$

\hfill $\square$

\section{Outlook on the moduli space of smooth hypersurfaces}

The goal is to discuss some of the implications of theorem $2$. First let us introduce some notation. Let $\mathcal{H}_{n,d}$ be the moduli space of degree $d$ hypersurfaces in $\mathbb{P}^n$; more precisely, the quotient $\mathcal{S}_{n,d}/\operatorname{PGL}_{n+1}$. It is well known that it  is a Deligne-Mumford stack over $\Sp(\mathbb{Z})$; see \cite[Theorem 1.6]{be}.
We also have the following fiber bundle 

\[
\begin{tikzcd}
  \operatorname{PGL}_{n+1} \ar[r] &\mathcal{S}_{n,d} \ar[d] \\
  & \mathcal{H}_{n,d}
\end{tikzcd}
\]

which translates into the equality $[\operatorname{PGL}_{n+1}]\cdot [\mathcal{H}_{n,d}]= [\mathcal{S}_{n,d}]$ in the Grothendieck ring of stacks, $K_0(\operatorname{St})$.

The set $|\mathcal{H}_{n,d}(\mathbb{F}_q)|$ encodes the number isomorphism classes of smooth projective hypersurfaces of degree $d$ in $\mathbb{P}^n$. We can pick one representative for each of the isomorphism classes and thus identify  this with $|\mathcal{H}_{n,d}(\mathbb{F}_q)|$. Note that we have a natural probability measure on this where for each hypersuface we weight it by $\cfrac{1}{|\Aut_{\mathbb{F}_q}(X)|}$. This is well understood to be most natural way to count objects which have automorphisms and we obviously get the equality $|\mathcal{H}_{n,d}(\mathbb{F}_q)|=\displaystyle \sum\cfrac{1}{|\Aut_{\mathbb{F}_q}(X)|} $, where the sum is taken over the representatives we've chosen.

Using theorem 2 we obtain the following: $|\mathcal{H}_{n,d}(\mathbb{F}_q)|=\cfrac{|S_{n,d}|}{|\operatorname{Gl}_{n+1}(\mathbb{F}_q)|}+O(q^{C\binom{n+d}{d}+(n+1)^2})$, where  $C=1-\cfrac{1}{2^n}$.
We can see this as an incarnation of the above equality in the Grothendieck ring of stacks, but this is neither implied, or implies the equality in  $K_0(\operatorname{St})$.

We will use a special case of  \cite[Theorem 1.2]{bu} which gives an error term for Bertini theorems over finite fields. 

\begin{theorem}[Bucur-Kedlaya] We have that 
$$\left|\frac{|\mathcal{S}_{n,d}|}{q^{\binom{d+n}{n}}}-\zeta_{\mathbb{P}^n}(n+1)\right|\leq C_1 q^{-\delta}+C_2 q^{-\frac{d}{\max\{n+1,p\}}}$$

where $C_1$ and $C_2$ are explicit constants and $1+\cfrac{\log_q(d)}{n}>\delta>\cfrac{\log_q(d)}{n}-2$.

\end{theorem}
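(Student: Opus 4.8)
This is an effective form of Poonen's Bertini theorem over finite fields, so the plan is to run Poonen's closed-point sieve while keeping every tail and constant explicit, as in Bucur--Kedlaya. The starting point is the local criterion for singularity: $V(f)\subset\mathbb{P}^n_{\mathbb{F}_q}$ is singular at a closed point $P$ exactly when $f$ lies in $\mathfrak{m}_P^2$ locally, that is when $f$ and all of its first partials vanish at $P$. For a point $P$ of degree $e$ this cuts out $(n+1)e$ linear conditions on the coefficient space $\mathcal{P}_d$ (one for the value and $n$ for the differential, each over $\mathbb{F}_{q^e}$), and once $d$ is large enough that the evaluation map $\mathcal{P}_d\to\mathcal{O}_{\mathbb{P}^n}/\mathfrak{m}_P^2$ is surjective the density of $f$ singular at $P$ is exactly $q^{-(n+1)e}$. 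I would then split the closed points by degree into a low range $\deg P\le r$, a medium range, and a high range, and estimate each separately.

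For the low range the decisive observation is that the singularity conditions at the finitely many points of degree at most $r$ are \emph{simultaneously} independent as soon as $d$ exceeds the total number of conditions $\sum_{\deg P\le r}(n+1)\deg P$, which is $\asymp q^{nr}$ because there are $\asymp q^{ne}/e$ points of degree $e$. Hence independence, and exact inclusion--exclusion, hold provided $q^{nr}\lesssim d$, i.e. $r\lesssim \log_q(d)/n$; I would take $r$ as large as this constraint permits. Inclusion--exclusion over the low-degree points then yields the partial product $\prod_{\deg P\le r}(1-q^{-(n+1)\deg P})$ exactly, and comparing with the full Euler product $\prod_P(1-q^{-(n+1)\deg P})$, which is the main term recorded in the Poonen theorem above, the discarded tail is $\lesssim\sum_{e>r}q^{ne}\cdot q^{-(n+1)e}=\sum_{e>r}q^{-e}\asymp q^{-r}$. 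Choosing $r\approx \log_q(d)/n$ makes this $\asymp q^{-\delta}$; the same geometric series also dominates the medium-range contribution (the density of $f$ singular at some point of degree between $r$ and roughly $d/(n+1)$), so both feed into the first error term $C_1 q^{-\delta}$, and the stated window for $\delta$ reflects the slack in choosing $r$ together with the crude point count $\#\{P:\deg P=e\}\lesssim q^{ne}$.

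The genuinely hard step is the high range: bounding the density of $f$ singular at some point of degree above the medium cutoff, where the naive union bound diverges. Here I would follow Poonen's derivative argument, writing a candidate $f$ as a controlled perturbation in which $p$-th powers of free polynomials carry the values of the partial derivatives (since differentiation annihilates $p$-th powers), thereby decoupling the gradient from $f$ and showing that with overwhelming probability the partials have no common zero of large degree; making the exponent in this estimate explicit is exactly what produces the second error term $C_2\,q^{-d/\max\{n+1,p\}}$, the $\max\{n+1,p\}$ recording both the number of partial derivatives exploited and the characteristic-$p$ obstruction. I expect this high-degree bound to be the main obstacle: one must keep $C_2$ uniform in $d$ and extract the precise exponent $d/\max\{n+1,p\}$, and must also check that the single choice $r\approx\log_q(d)/n$ simultaneously keeps the low-range inclusion--exclusion exact and drives its tail down to the size of the medium-range bound. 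Assembling the three ranges and collecting the constants then gives the claimed inequality.
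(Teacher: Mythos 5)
The paper does not prove this statement at all: it is imported verbatim as a citation of Theorem 1.2 of Bucur--Kedlaya, so there is no in-paper argument to compare against. Your sketch is, however, a faithful outline of the proof in that reference, which is Poonen's closed-point sieve made effective: the local criterion $f\in\mathfrak{m}_P^2$ giving $(n+1)\deg P$ linear conditions and density $q^{-(n+1)\deg P}$; exact inclusion--exclusion over points of degree $\le r$ via surjectivity of the evaluation map, which forces $r\approx\log_q(d)/n$ and produces both the truncated Euler product and the tail $\asymp q^{-r}$ (whence the window for $\delta$, whose width reflects the integrality slack in $r$); a union bound for the medium range $r<\deg P\lesssim d/(n+1)$, where surjectivity onto a single $\mathcal{O}/\mathfrak{m}_P^2$ still holds; and Poonen's decoupling of the partial derivatives via $p$-th powers for the high range, which is where the exponent $d/\max\{n+1,p\}$ arises. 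The only caveat is that your write-up is a genuine sketch rather than a proof --- in particular the high-range lemma, which you correctly identify as the hard step, is only described, and the explicit constants $C_1,C_2$ are not extracted --- but as a blind reconstruction of the strategy behind the cited result it is accurate, and it is certainly not in conflict with anything the paper does.
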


This implies that for $d$ large enough we have the following theorem.

\begin{theorem} The number isomorphism classes of smooth projective hypersurfaces of degree $d$ in $\mathbb{P}^n$ is

$$|\mathcal{H}_{n,d}(\mathbb{F}_q)|=q^{D}-q^{D-1}+O(q^{D-\delta})$$

where $D=\dbinom{d+n}{n}-(n+1)^2+1=\dim(\mathcal{H}_{n,d})$.

\end{theorem}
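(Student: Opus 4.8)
The plan is to read off the answer by feeding the Bucur--Kedlaya count (Theorem 8) into the mass formula recorded just above the statement, and then to watch the special value $\zeta_{\mathbb{P}^n}(n+1)$ annihilate the ``volume'' factor hidden in the order of the linear group, leaving a polynomial in $q$. First I would fix the groupoid interpretation: the weighted count $|\mathcal{H}_{n,d}(\mathbb{F}_q)| = \sum_X 1/|\Aut_{\mathbb{F}_q}(X)|$ is the point count of the quotient stack $\mathcal{S}_{n,d}/\operatorname{PGL}_{n+1}$, so
\[
|\mathcal{H}_{n,d}(\mathbb{F}_q)| = \frac{|\mathcal{S}_{n,d}|}{|\operatorname{PGL}_{n+1}(\mathbb{F}_q)|},
\]
the linear group being the one acting since every automorphism is induced from $\operatorname{PGL}_{n+1}$; the discrepancy between this weighted count and the literal number of isomorphism classes is controlled by Theorem 2 and will be shown to be negligible.

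The arithmetic core is a single cancellation. I would write the group order explicitly as $|\operatorname{GL}_{n+1}(\mathbb{F}_q)| = q^{(n+1)^2}\prod_{j=1}^{n+1}(1-q^{-j})$, together with $|\operatorname{PGL}_{n+1}(\mathbb{F}_q)| = |\operatorname{GL}_{n+1}(\mathbb{F}_q)|/(q-1)$, and use that the limiting smoothness density $\zeta_{\mathbb{P}^n}(n+1)$ is exactly $\prod_{j=1}^{n+1}(1-q^{-j})$, i.e. the reciprocal of the volume factor appearing in $|\operatorname{GL}_{n+1}(\mathbb{F}_q)|$. Substituting $|\mathcal{S}_{n,d}| = \zeta_{\mathbb{P}^n}(n+1)\,q^{\binom{d+n}{n}}\bigl(1+O(q^{-\delta})\bigr)$ from Theorem 8, the two copies of $\prod_{j=1}^{n+1}(1-q^{-j})$ cancel and
\[
\frac{\zeta_{\mathbb{P}^n}(n+1)\,q^{\binom{d+n}{n}}}{|\operatorname{PGL}_{n+1}(\mathbb{F}_q)|} = (q-1)\,q^{\binom{d+n}{n}-(n+1)^2} = (q-1)\,q^{D-1} = q^{D}-q^{D-1},
\]
using $D-1 = \binom{d+n}{n}-(n+1)^2$. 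This already exhibits the two leading terms; the surviving factor $q-1$ is exactly the scaling $\mathbb{G}_m$ that makes $\mathcal{H}_{n,d}$ a gerbe of dimension $D$ rather than $D-1$.

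It remains to propagate errors. The relative error $O(q^{-\delta})$ from Theorem 8 multiplies a main term of size $q^{D}$, contributing $O(q^{D-\delta})$, which matches the statement; I would also check that once $d$ is large the secondary Bucur--Kedlaya term $q^{-d/\max\{n+1,p\}}$ is dominated by $q^{-\delta}$, using $\delta \sim \log_q(d)/n$. For the automorphism defect, Theorem 2 bounds the number of $X$ with $\Aut_{\mathbb{F}_q}(X)\neq\{1\}$ by $q^{C\binom{n+d}{d}+(n+1)^2}$ with $C=1-2^{-n}<1$; since $(1-C)\binom{n+d}{n}\to\infty$ while $D-\delta = \binom{d+n}{n}-(n+1)^2+1-\delta$, this exponent is smaller than $D-\delta$ for $d$ large, so both the gap between $\sum_X 1/|\Aut_{\mathbb{F}_q}(X)|$ and $\sum_X 1$ and the gap between $\sum_X|\Aut_{\mathbb{F}_q}(X)|$ and $|\mathcal{S}_{n,d}|$ are absorbed into $O(q^{D-\delta})$.

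I expect the genuine difficulty to lie not in the cancellation but in making the mass formula an exact identity of groupoid cardinalities. Because $\operatorname{PGL}_{n+1}$ is not a special group, an $\mathbb{F}_q$-point of $\mathcal{S}_{n,d}/\operatorname{PGL}_{n+1}$ need not come from an honest $\mathbb{F}_q$-hypersurface (there can be cohomologically nontrivial twists), so one must invoke the general point-counting identity $|[\mathcal{S}/G](\mathbb{F}_q)| = |\mathcal{S}(\mathbb{F}_q)|/|G(\mathbb{F}_q)|$ for quotient stacks rather than a naive orbit count. Closely related, and the place where a bookkeeping slip is easiest, is pinning down the single factor of $q-1$: one must be consistent about whether $\mathcal{S}_{n,d}$ denotes the affine cone of forms (dimension $\binom{d+n}{n}$, which is what makes $\dim\mathcal{H}_{n,d}=D$) or the projective family, and correspondingly whether one divides by $\operatorname{GL}_{n+1}$ or $\operatorname{PGL}_{n+1}$, so that the final expression lands on $q^{D}-q^{D-1}$ and not on $q^{D-1}$.
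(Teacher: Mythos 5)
Your proposal is correct and follows the route the paper intends: Theorem 9 is obtained exactly by substituting the Bucur--Kedlaya estimate for $|\mathcal{S}_{n,d}|$ into the mass formula $|\mathcal{H}_{n,d}(\mathbb{F}_q)|=|\mathcal{S}_{n,d}|/|G(\mathbb{F}_q)|+O\bigl(q^{C\binom{n+d}{n}+(n+1)^2}\bigr)$ supplied by Theorem 2, and cancelling $\prod_{j=1}^{n+1}(1-q^{-j})$ against the group order, with the automorphism defect and the secondary error term absorbed into $O(q^{D-\delta})$ just as you check. If anything your bookkeeping is more careful than the paper's: the paper's displayed intermediate formula divides the form count by $|\operatorname{GL}_{n+1}(\mathbb{F}_q)|$, which would produce $q^{D-1}$, and it is precisely your version --- the count of smooth forms divided by $|\operatorname{PGL}_{n+1}(\mathbb{F}_q)|$, with the surviving factor $q-1$ recording the $\mathbb{G}_m$-scaling ambiguity you flag --- that actually lands on the stated $q^{D}-q^{D-1}$ with $D=\binom{d+n}{n}-(n+1)^2+1$.
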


Let us make a few remarks about the above result. We can think about the above as being a stabilization result for the point counts on the moduli space $\mathcal{H}_{n,d}$, and it offers a different perspective on the stabilization results proved in \cite{wo}. 
We can also try to count the number of points on $\mathcal{H}_{n,d}$  using the Lefschetz trace formula for Deligne Mumford Stacks proved by Behrend \cite{beh}. Tomassi \cite{to} has proven that the singular  cohomology  of $\mathcal{H}_{n,d}$ vanishes in degrees $2\leq k\leq \cfrac{d+1}{2}$, thus stabilizing.  We know that $\mathcal{H}_{n,d}$ has a compactification such that the boundary is a normal crossings divisor with respect to $\Sp(\mathbb{Z})$, and thus we can compare the etale cohomology and the singular cohomology and conclude that etale cohomology vanishes also in these degrees.
As far as we know there are no results proven about the unstable cohomology $\mathcal{H}_{n,d}$, so the above theorem cannot be simply deduced from this cohomological side. 

If we use the same heuristic as in \cite{ac} about the eigenvalues of $\F$ acting on $H^{*}_{et}$, we can conjecture that there should not be too many unstable classes coming from algebraic cycles in each degree close to the dimension $2D$. It is well known that the eigenvalues of Frobenius attached to algebraic cycles are integral powers of $q$.  The classes that are not coming from algebraic cycles of weight close to the dimension $2D$ can either be: few in each weight so their contribution is small, or they  can be modeled using a random unitary matrix, and using a result of Diaconis and Shahshahani \cite{di} this matrix has bounded trace with high probability.  Thus we can heuristically think about the contribution of non-algebraic classes in the Grothendieck-Lefschetz trace formula  as being negligible as $d\rightarrow \infty$. 

{\small{

}}
	
	\end{document}